\documentclass[12pt]{amsart}
\usepackage{amsmath, amsfonts, amssymb, amsthm,hyperref,mathtools,array}
\hypersetup{hypertex=true,
	colorlinks=true,
	linkcolor=blue,
	anchorcolor=blue,
	citecolor=blue}
\usepackage{bm}
\allowdisplaybreaks[4]
\def\({\bg(}
\def\){\bg)}

\def\rank #1{{\rm rank}\ #1}

\def\Tr{{\rm Tr}}

\def\u{{\bm u}}

\def\0{{\bm 0}}
\def\1{{\bm 1}}
\def\i{{\bm i}}

\def\diag{{\rm diag}}

\def\pmod #1{\ ({\rm{mod}}\ #1)}

\theoremstyle{plain}
\newtheorem{theorem}{Theorem}[section]
\newtheorem{lemma}{Lemma}

\theoremstyle{definition}

\theoremstyle{remark}
\newtheorem{remark}{Remark}

\newcommand{\sign}[1]{\mathrm{sign}(#1)}
\makeatletter
\@namedef{subjclassname@2020}{%
	\textup{2020} Mathematics Subject Classification}
\makeatother
\vspace{4mm}

\begin{document}
	
	\title[Cyclotomic matrices related to Kloosterman sums over finite fields]
	{Cyclotomic matrices related to Kloosterman sums over finite fields}
	\author[H.-L. Wu]{Hai-Liang Wu }
	
	\address {(Hai-Liang Wu) School of Science, Nanjing University of Posts and Telecommunications, Nanjing 210023, People's Republic of China}
	\email{\tt whl.math@smail.nju.edu.cn}
	
	\keywords{Kloosterman sums, cyclotomic matrices, finite fields.
		\newline \indent 2020 {\it Mathematics Subject Classification}. Primary 11L05, 15A15; Secondary 11R18, 12E20.
		\newline \indent This research was supported by the Natural Science Foundation of China (Grant No. 12101321) and the Natural Science Foundation of the Higher Education Institutions of Jiangsu Province (Grant No. 25KJB110010).
	}
	
	\begin{abstract}
		In this paper, by using the arithmetic properties of character sums over finite fields, we investigate some cyclotomic matrices involving Kloosterman sums over finite fields. For example, let 
		$$K_q(u)=\sum_{x\in\mathbb{F}_q\setminus\{0\}}e^{\frac{2\pi i}{p}{\rm Tr}_{\mathbb{F}_q/\mathbb{F}_p}\left(x+\frac{u}{x}\right)}$$
		be the Kloosterman sum over $\mathbb{F}_q$, where $q=p^f$ is an odd prime power. We prove that matrix $[K_q(s_i+s_j)]_{1\le i,j\le (q-1)/2}$ is singular whenever $q\ge 11$, where $s_1,s_2,\cdots,s_{(q-1)/2}$ are exactly all non-zero squares over $\mathbb{F}_q$. 
	\end{abstract}
	\maketitle
	
	\section{Introduction}
	\setcounter{lemma}{0}
	\setcounter{theorem}{0}
	\setcounter{equation}{0}
	\setcounter{conjecture}{0}
	\setcounter{remark}{0}
	\setcounter{corollary}{0}
	
	\subsection{Notation}
	
	Throughout this paper, let $q=p^f$ be an odd prime power, where $p$ is an odd prime and $f\in\mathbb{Z}^+$. Let 
	$$\mathbb{F}_q=\left\{0=x_0,x_1,\cdots,x_{q-1}\right\}$$ 
	indicate the finite field with $q$ elements. Let $\mathbb{F}_q^{\times}=\mathbb{F}_q\setminus\{0\}$ be the multiplicative group of all non-zero elements over $\mathbb{F}_q$. We use $\widehat{\mathbb{F}_q^{\times}}$ to denote the cyclic group of all multiplicative characters of $\mathbb{F}_q$, and let $\chi_q$ be a generator of $\widehat{\mathbb{F}_q^{\times}}$. For any multiplicative character $\chi_q^r: \mathbb{F}_q^{\times}\rightarrow\mathbb{C}$, we additionally define $\chi_q^r(0)=0$. Let 
	$$\mathcal{S}_q=\left\{x^2: x\in\mathbb{F}_q^{\times}\right\}=\left\{s_1,s_2,\cdots,s_{(q-1)/2}\right\}$$
	and $\mathcal{N}_q=\mathbb{F}_q^{\times}\setminus\mathcal{S}_q$. Then the unique quadratic multiplicative character $\chi_q^{(q-1)/2}$ of $\mathbb{F}_q$ is denoted by $\phi_q$, that is, 
	$$\phi_q(x)=\begin{cases}
		1   & \mbox{if}\ x\in\mathcal{S}_q,\\
		0  & \mbox{if}\ x=0,\\
		-1 & \mbox{if}\ x\in\mathcal{N}_q.
	\end{cases}$$
	
	Let $\zeta_p=e^{2\pi \i/p}$ be a primitive $p$-th root of unity, where $\i=\sqrt{-1}$ with argument $\pi/2$. Let $\Tr: \mathbb{F}_q\rightarrow \mathbb{F}_p$ be the trace map, i.e., 
	$$\Tr(x)=x+x^{p}+\cdots+x^{p^{f-1}}$$
	for any $x\in\mathbb{F}_q$. Given any $a,b\in\mathbb{F}_q$, the Kloosterman sum is defined by 
	\begin{equation}\label{Eq. definition of Kloosterman sums}
		K_q(a,b)=\sum_{x\in\mathbb{F}_q^{\times}}\zeta_p^{\Tr(ax+\frac{b}{x})}.
	\end{equation}
	When $a\in\mathbb{F}_q^{\times}$, it is easy to see that $K_q(a,b)=K_q(1,ab)$. From now on, we abbreviate $K_q(1,b)$ as $K_q(b)$ for any $b\in\mathbb{F}_q$. 
	
	On the other hand, for any square matrix $M$, let $M(i,j)$ be the $(i,j)$-entry of $M$. Also, we use the symbols $\det M$ and $\rank M$ to denote the determinant and rank of $M$ respectively. 
	
	\subsection{Background and motivation} Kloosterman sums have extensive applications in number theory, modular forms and algebraic geometry. For example, in 1948, Weil \cite{Weil} used the theory of algebraic curves over finite fields to prove that 
	$$\left|K_q(a,b)\right|\le 2\sqrt{q}$$
	whenever $ab\in\mathbb{F}_q^{\times}$. In addition, when $q=p$ is an odd prime, let 
	$$h(z)=\eta(z)^2\eta(2z)^2\eta(3z)^2\eta(6z)^2$$ 
	be a cusp form of weight $4$, where $\eta(z)$ is the Dedekind eta function. Hulek, Spandaw, van Geemen and van Straten \cite{H} showed that 
	$$\sum_{b=0}^{p-1}K_p(b)^6=5p^4-10p^3-9p^2-c_pp^2-5p,$$
	where $c_p$ is the $p$-th fourier coefficient of the cusp form $h(z)$. For more results on $k$-th power moments of Kloosterman sums, readers may refer to \cite{Evans}. 
	
	Next we state our research motivation.  Carlitz \cite{Carlitz} initiated the investigation of cyclotomic matrices. For example, Carlitz \cite[Theorem 5]{Carlitz} proved that 
	$$\det \left[\phi_p(i+j)\right]_{1\le i,j\le p-1}=\frac{1}{p}G_p^{p-1}=(-1)^{(p-1)/2}p^{(p-3)/2},$$
	where 
	$$G_p=\sum_{x\in\mathbb{F}_p}\phi_p(x)\zeta_p^x$$
	is the quadratic Gauss sum over $\mathbb{F}_p$. Along this line, Z.-W. Sun \cite[Theorem 1.2]{Sun} showed that  
	 $$\det \left[\phi_p(i^2+j^2)\right]_{1\le i,j\le (p-1)/2},$$
	 is a quadratic residue modulo $p$. Inspired by the above results due to Carlitz and Sun, recently, the author, Li, Wang and Yip \cite{WLWY} investigated some determinants involving Gauss sums, and proved that 
	 $$\det\left[G_q(\chi_q^{i+j})\right]_{0\le i,j\le q-2}=(-1)^{(q-3)/2}(q-1)^{q-1}$$
	 and 
	 $$\det\left[G_q(\chi_q^{2i+2j})\right]_{0\le i,j\le (q-3)/2}=(-1)^{\alpha_f}\cdot \left(\frac{q-1}{2}\right)^{(q-1)/2}\cdot  2^{(p^{f-1}-1)/2},$$
	 where $f=[\mathbb{F}_q:\mathbb{F}_p]$ and 
	 $$\alpha_f=\begin{cases}
	 	1                    &  \mbox{if}\ f\equiv 1\pmod 2,\\
	 	(p^2+7)/8  &  \mbox{if}\ f\equiv 0\pmod 2.
	 \end{cases}$$
	The above two determinants can be viewed as the finite field analogues of the following determinant \cite[(4.5)]{Normand}:
	$$\det \left[\Gamma(i+j)\right]_{1\le i,j\le n}=\prod_{r=0}^{n-1}r!(r+1)!,$$
	where $\Gamma(\cdot)$ is the Gamma function. Now we return to the Kloosterman sums. The Kloosterman sum $K_p(a,b)$ is a finite field analogue of certain Bessel function. In fact, the modified Bessel function of the second kind is defined by 
	$$K_0(x)=\frac{1}{2}\int_{0}^{\infty}\exp\left(-\frac{x}{2}\left(t+\frac{1}{t}\right)\right)\frac{dt}{t},$$
	where $x>0$. For any positive real numbers $\alpha$ and $\beta$, using integral transforms, we can obtain 
	\begin{equation}\label{Eq. Bessel function}
		K_0(2\sqrt{\alpha\beta})=\frac{1}{2}\int_{0}^{\infty}\exp\left(-\alpha t-\frac{\beta}{t}\right)\frac{dt}{t}.
	\end{equation}
	Thus, (\ref{Eq. definition of Kloosterman sums}) can be viewed as a finite field analogue of (\ref{Eq. Bessel function}). For a concrete introduction on finite field analogues, readers may refer to \cite{F,G}. 
	
	Motivated by the above works, in this paper, we focus on the cyclotomic matrices related to Kloosterman sums over $\mathbb{F}_q$. Specifically, we concentrate on the matrices 
	$$A_q(1)=\left[K_q(x_i+x_j)\right]_{1\le i,j\le q-1},$$
	and
	$$A_q(2)=\left[K_q(s_i+s_j)\right]_{1\le i,j\le (q-1)/2}.$$

	\subsection{Main theorems} Now we state our first theorem.
	
	\begin{theorem}\label{Thm. A}
		Let $q$ be an odd prime power. Then $\det A_q(1)=(-1)^{(q-1)/2}q^{q-2}$.
	\end{theorem}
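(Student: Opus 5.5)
The plan is to diagonalize the matrix $A_q(1)$ via additive characters of $\mathbb{F}_q$, writing it as $P D P^{T}$ with $P$ a piece of the character table of $(\mathbb{F}_q,+)$. Write $\psi(x)=\zeta_p^{\Tr(x)}$. First compute the Fourier transform of $b\mapsto K_q(b)$. For $t\in\mathbb{F}_q$,
$$\sum_{b\in\mathbb{F}_q}K_q(b)\psi(-tb)=\sum_{x\in\mathbb{F}_q^{\times}}\psi(x)\sum_{b\in\mathbb{F}_q}\psi\!\left(b\left(\tfrac1x-t\right)\right).$$
The inner sum equals $q$ if $x=1/t$ and $0$ otherwise. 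Hence the transform is $q\,\psi(1/t)$ for $t\ne0$ and $0$ for $t=0$. Fourier inversion then gives the key identity
$$K_q(b)=\sum_{t\in\mathbb{F}_q^{\times}}\psi(t^{-1})\,\psi(tb)\qquad(b\in\mathbb{F}_q).$$

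Substituting $b=x_i+x_j$ and using $\psi(t(x_i+x_j))=\psi(tx_i)\psi(tx_j)$, we obtain $A_q(1)=PDP^{T}$. Here $P=[\psi(x_i t)]$ has rows indexed by $i=1,\dots,q-1$ and columns indexed by $t\in\mathbb{F}_q^{\times}$, and $D=\diag(\psi(t^{-1}))_{t\in\mathbb{F}_q^{\times}}$. Therefore
$$\det A_q(1)=(\det P)^2\prod_{t\ne0}\psi(t^{-1})=(\det P)^2\,\psi\Big(\sum_{t\in\mathbb{F}_q^{\times}}t\Big)=(\det P)^2,$$
because the nonzero elements of $\mathbb{F}_q$ sum to $0$ (they are the roots of $X^{q-1}-1$). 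The value of $\det P$ itself depends on the orderings of rows and columns, but $(\det P)^2$ does not, since we used the same ordering on both sides.

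It remains to compute $(\det P)^2$, which I expect to be the only genuinely delicate step. Let $F=[\psi(xy)]_{x,y\in\mathbb{F}_q}$ be the full $q\times q$ character table, with the element $0$ placed first. Orthogonality gives $F\overline{F}=qI$, so $F^{-1}=\frac1q\overline{F}$. It also gives $F^2=qJ$, where $J$ is the permutation matrix of $x\mapsto -x$. The map $x\mapsto -x$ has one fixed point and $(q-1)/2$ transpositions, so $\det J=(-1)^{(q-1)/2}$ and $(\det F)^2=(-1)^{(q-1)/2}q^{q}$.

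The matrix $P$ is $F$ with its $0$-th row and $0$-th column deleted. By the cofactor (Jacobi) formula,
$$\det P=\det F\cdot (F^{-1})(0,0)=\det F\cdot\frac{\overline{\psi(0)}}{q}=\frac{\det F}{q}.$$
Hence $(\det P)^2=(-1)^{(q-1)/2}q^{q-2}$, which is exactly the asserted value of $\det A_q(1)$.

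The main points to check carefully are the sign bookkeeping in $\det J$ and the fact that $\prod_t\psi(t^{-1})=1$. The latter is immediate from $\sum_{t\ne0}t=0$, which also holds for $q=3$.
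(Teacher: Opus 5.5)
Your proof is correct. It uses the same additive-character factorization as the paper: your $P$ is the paper's $M_q$ and your $D$ is $D_q$. The difference lies in how you evaluate the key determinant.

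The paper never factors $A_q(1)$ itself. It writes $[K_q(x_i-x_j)]=M_qD_qM_q^{H}$, so it only needs the modulus $|\det M_q|^2=\det(M_q^{H}M_q)=\det(qI_{q-1}-J_{q-1})=q^{q-2}$, which it reads off from the eigenvalues of the all-ones matrix. The sign $(-1)^{(q-1)/2}$ is then inserted separately, as the sign of the column permutation $x\mapsto -x$ of $\mathbb{F}_q^{\times}$ (Lemma 1).

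You factor $A_q(1)=PDP^{T}$ directly. You therefore need $(\det P)^2$ itself rather than its modulus, and the sign must come out of that computation. You obtain it by embedding $P$ in the full $q\times q$ Fourier matrix $F$. From $F^2=qJ$, with $J$ the permutation matrix of negation on all of $\mathbb{F}_q$, you get $(\det F)^2=(-1)^{(q-1)/2}q^{q}$; this has the same parity as on $\mathbb{F}_q^{\times}$ since $0$ is fixed. Combining this with the complementary-minor identity $\det P=\det F\cdot (F^{-1})(0,0)=\det F/q$ finishes the computation.

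The paper's route is slightly more elementary and stays within $(q-1)\times(q-1)$ matrices. Yours avoids the auxiliary difference matrix and the permutation lemma. It also yields the finer relation $\det P=\det F/q$ between the punctured and full character tables.

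One small simplification: the Fourier-inversion derivation of $K_q(b)=\sum_{t\neq 0}\psi(t^{-1})\psi(tb)$ is unnecessary. The substitution $x=t^{-1}$ in the definition of $K_q(b)$ gives it immediately, which is what the paper does.
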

	
	\begin{remark}\label{Remark of Thm. A}
		The proof of Theorem \ref{Thm. A} is based on standard matrix decomposition of $A_q(1)$.
	\end{remark}
	
	Next we state our second theorem.
	
	\begin{theorem}\label{Thm. B}
		Let $q$ be an odd prime power. Then the matrix $A_q(2)$ is singular whenever $q\ge 11$. In addition, $\det A_3(2)=2, \det A_5(2)=-5$, and $\det A_7(2)=49$. 
	\end{theorem}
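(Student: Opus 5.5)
The plan is to diagonalize $A_q(2)$ with the multiplicative characters of the cyclic group $\mathcal{S}_q$, so that its determinant factors into explicit character-sum quantities, and then to exhibit a factor that vanishes once $q\ge 11$. Set $n=(q-1)/2$ and $\psi(x)=\zeta_p^{\Tr(x)}$. The characters of $\mathcal{S}_q$ are the restrictions of $\chi_q^{2k}$ for $0\le k\le n-1$, and the matrix $U=[\chi_q^{2k}(s_i)]_{i,k}$ satisfies $\overline{U}^{T}U=nI$. It therefore suffices to study
$$B(k,l)=\sum_{a,b\in\mathcal{S}_q}K_q(a+b)\,\chi_q^{-2k}(a)\chi_q^{-2l}(b),$$
since $\det A_q(2)$ equals $n^{-n}\det B$ up to a nonzero constant.

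\emph{Step 1 (Gauss and Jacobi sums).} I substitute $b=at$ with $t\in\mathcal{S}_q$, so that $a+b=a(1+t)$. Next I write $\mathbf{1}_{\mathcal{S}_q}(a)=\frac12(1+\phi_q(a))$ and use the standard evaluation
$$\sum_{a\in\mathbb{F}_q^\times}\chi(a)K_q(ac)=\overline{\chi}(c)\,G_q(\chi)^2,$$
which follows by expanding $K_q$ and applying the Gauss sum twice. This expresses $B(k,l)$ as a combination of $G_q(\chi_q^{-2k-2l})^2$ and $G_q(\phi_q\chi_q^{-2k-2l})^2$, each multiplied by a $t$-sum $\sum_{t\in\mathcal{S}_q}\chi_q^{-2l}(t)\,\overline{\chi}(1+t)$. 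Applying $\mathbf{1}_{\mathcal{S}_q}$ again, that $t$-sum becomes a sum of Jacobi sums. I then rewrite every Jacobi sum as $G(\lambda)G(\mu)/G(\lambda\mu)$ and simplify with $G_q(\chi)G_q(\overline{\chi})=\chi(-1)q$. Separate care is needed for the trivial-character cases, i.e.\ $k+l\equiv 0$ or $l=0$, together with their $\phi_q$-twisted analogues.

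\emph{Step 2 (Hankel--circulant form).} After Step 1 I expect a factorization $B=D_1HD_2$. Here $D_1$ and $D_2$ are diagonal with Gauss-sum entries, which are nonzero because $|G_q(\chi)|=\sqrt q$ for nontrivial $\chi$. The matrix $H(k,l)=h(k+l \bmod n)$ is a Hankel circulant, with $h(m)$ built from $G_q(\chi_q^{\pm2m})$ and $G_q(\phi_q\chi_q^{\pm 2m})$. The $q$-dependent correction terms from the trivial-character cases have to be absorbed into a low-rank perturbation. Since a Hankel circulant is a circulant times a permutation matrix, $\det H=\pm\prod_{j=0}^{n-1}\widehat h(j)$ with $\widehat h(j)=\sum_m h(m)\omega^{jm}$ and $\omega=e^{2\pi\i/n}$.

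\emph{Step 3 (a vanishing eigenvalue).} Summing a Gauss sum $G_q(\chi_q^{2m})$ against $\omega^{jm}$ over $m$ inverts the multiplicative Fourier transform. Concretely, $\widehat h(j)$ is proportional to a sum of $\psi(x)$ over those $x$ lying in a fixed coset of the subgroup of squares (or of index-$2$ cosets, for the twisted part). So each $\widehat h(j)$ is an explicit Gaussian-period-type expression. The claim then reduces to showing that some such combination vanishes identically for $q\ge 11$. The most likely mechanism is a cancellation of the form "$\psi$-sum over a coset $+$ $\psi$-sum over its negative $=$ the correction term", and this would be most transparent for $j$ corresponding to the character $\phi_q$ on $\mathcal{S}_q$.

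This last step is the main obstacle. Steps 1--2 will likely produce extra rank-one corrections rather than a pure circulant. I would first try to prove singularity more directly, by exhibiting an explicit kernel vector: a fixed character $\chi_q^{2k_0}$ restricted to $\mathcal{S}_q$, with $k_0$ chosen so that both the untwisted and twisted contributions to $\sum_i K_q(s_i+s_j)\chi_q^{2k_0}(s_i)$ vanish for every $j$. I expect this to require Gauss sums over characters of order not dividing $4$, which explains the threshold $q\ge 11$. The small cases $q=3,5,7$ are then verified by direct computation of the $1\times1$, $2\times2$ and $3\times3$ determinants.
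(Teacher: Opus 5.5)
\textbf{There is a genuine gap: the proposal has no mechanism that forces singularity.} Step 3, where the vanishing has to come from, is left open. Step 2 is also not what Step 1 actually produces. Because $A_q(2)$ is not invariant under dilations $s\mapsto ts$ of $\mathcal{S}_q$, the transform $B$ is not diagonal. After the two splittings $\mathbf{1}_{\mathcal{S}_q}=\frac12(1+\phi_q)$ and the Jacobi-to-Gauss conversion, $B(k,l)$ is a sum of four terms $\alpha_r(k)h_r(k+l)\beta_r(l)$, not a single $D_1HD_2$. These terms have different diagonal factors (e.g.\ $G_q(\overline{\lambda_l})/G_q(\lambda_k)$ versus $G_q(\phi_q\overline{\lambda_l})/G_q(\phi_q\lambda_k)$, with $\lambda_k$ the character indexing row $k$) and two different sequences $h_r$. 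So nothing reduces $\det B$ to Fourier coefficients of one sequence.

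More fundamentally, one vanishing eigenvalue is the wrong target: the kernel of $A_q(2)$ has dimension at least $\frac14(q-6+(-1)^n)$, roughly $q/4$.

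The idea you are missing is to conjugate by the invertible ``cosine'' matrix $N_q=[\zeta_p^{\Tr(2y_iy_j)}+\zeta_p^{-\Tr(2y_iy_j)}]_{1\le i,j\le n}$, where $y_i^2=s_i$; one checks $N_qN_q^T=qI_n-2J_n$. Write $K_q(s_k+s_r)=\sum_{z\ne0}\zeta_p^{\Tr(z(y_k^2+y_r^2)+1/z)}$. Then the sums over $k$ and $r$ become $-1+\sum_{x\in\mathbb{F}_q}\zeta_p^{\Tr(zx^2+2y_ix)}$, which you evaluate by completing the square and using $\sum_x\zeta_p^{\Tr(zx^2)}=\phi_q(z)G_q$. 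The result is $N_qA_q(2)N_q=G_q^2\Delta-\u_q\1_n^T-\1_n\u_q^T$, with $\Delta(i,j)=q$ if $s_i+s_j=1$ and $0$ otherwise. $\Delta$ is $q$ times a partial permutation matrix with $\#\{s\in\mathcal{S}_q:1-s\in\mathcal{S}_q\}=\frac14(q-4+(-1)^{n+1})$ nonzero entries. Hence $\mathrm{rank}\,A_q(2)\le\frac14(q+4+(-1)^{n+1})<n$ for $q\ge 11$. The threshold comes from this counting inequality, not from characters of order not dividing $4$.

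\textbf{Your fallback, a multiplicative character as an explicit kernel vector, provably fails.} For a character $\lambda$ of $\mathcal{S}_q$ let $v_\lambda=(\lambda(s_i))_i$. Since $\{\pm y_j\}$ exhausts $\mathbb{F}_q^\times$, the substitution $x\mapsto x/y_i$ gives $N_qv_\lambda=c_\lambda v_{\bar\lambda}$, with $c_\lambda=\sum_{x\ne0}\lambda(x^2)\zeta_p^{\Tr(2x)}\neq0$. Hence $A_q(2)v_\lambda=0$ iff $L_qv_{\bar\lambda}=0$, where $L_q=N_qA_q(2)N_q$.
\begin{itemize}
\item If $\lambda\ne1$, then $\1_n^Tv_{\bar\lambda}=0$. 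So $L_qv_{\bar\lambda}=0$ would force $(\Delta v_{\bar\lambda})(i)=q\bar\lambda(1-s_i)\mathbf{1}[1-s_i\in\mathcal{S}_q]$ to be constant in $i$. It is not: it vanishes at $s_i=1$ and is nonzero whenever $1-s_i\in\mathcal{S}_q$, and such $i$ exist for $q\ge 7$.
\item If $\lambda=1$, the entries of $L_q\1_n$ at $s_i=1$ and at an $i$ with $1-s_i\in\mathcal{S}_q$ differ by $(q-n)G_q^2\ne0$.
\end{itemize}
So $\ker A_q(2)$ contains no character at all.

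Separately, your $U$ is wrong when $q\equiv 1\pmod{4}$. Then $\chi_q^{2k}$ and $\chi_q^{2k+n}=\chi_q^{2k}\phi_q$ agree on $\mathcal{S}_q$, so $U=[\chi_q^{2k}(s_i)]$ has repeated columns and $\overline{U}^TU\ne nI$. The characters of $\mathcal{S}_q$ are the restrictions of $\chi_q^{k}$, $0\le k\le n-1$.
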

	
	\begin{remark}\label{Remark of Thm. B}
		The proof of Theorem \ref{Thm. B} is much more difficult than that of Theorem \ref{Thm. A}. We will use the quadratic Gauss sum over $\mathbb{F}_q$ to construct a  complicated additive decomposition of $A_q(2)$.
	\end{remark}
	
    \subsection{Outline of this paper} We will prove our theorems in Sections 2--3 respectively.

	\section{Proof of Theorem \ref{Thm. A}}
	\setcounter{lemma}{0}
	\setcounter{theorem}{0}
	\setcounter{equation}{0}
	\setcounter{conjecture}{0}
	\setcounter{remark}{0}
	\setcounter{corollary}{0}
	
	Recall that $q$ is an odd prime power and that  $\mathbb{F}_q^{\times}=\{x_1,x_2,\cdots,x_{q-1}\}$. We begin with the following lemma.
	
	\begin{lemma}\label{Lem. permutation}
		Let $q$ be an odd prime and $\tau_q$ be a permutation of $\mathbb{F}_q^{\times}$ defined by $x\mapsto -x$ for any $x\in\mathbb{F}_q^{\times}$. Then 
		$$\sign{\tau_q}=(-1)^{\frac{q-1}{2}},$$
		where $\sign{\tau_q}$ denotes the sign of $\tau_q$. 
	\end{lemma}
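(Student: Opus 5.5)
The map $\tau_q$ is an involution of $\mathbb{F}_q^{\times}$. Since $q$ is odd, $x\neq -x$ for every $x\in\mathbb{F}_q^{\times}$, so $\tau_q$ has no fixed points. I will therefore decompose $\tau_q$ into disjoint cycles and count transpositions.

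First, because $\tau_q^2=\mathrm{id}$ and $\tau_q(x)=-x\neq x$ (as $2x\neq 0$ in characteristic $p\neq 2$), every orbit of $\langle\tau_q\rangle$ on $\mathbb{F}_q^{\times}$ has exactly two elements $\{x,-x\}$. Thus $\tau_q$ is a product of $(q-1)/2$ disjoint transpositions. Each transposition has sign $-1$, so
$$\sign{\tau_q}=(-1)^{\frac{q-1}{2}}.$$
An explicit set of orbit representatives is not needed, though one may take the squares $\mathcal{S}_q$ when $-1\in\mathcal{N}_q$. In general, any choice of one element from each pair $\{x,-x\}$ works.

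There is no real obstacle here. The only point to check is that $q$ odd guarantees the absence of fixed points. The argument holds verbatim for any odd prime power $q$, not only for primes, which is the generality the later proof of Theorem~\ref{Thm. A} needs. In that proof the lemma will be used when the matrix indexed by $(x_i,x_j)$ is rewritten via the reindexing $j\mapsto\tau_q(j)$: the sign $(-1)^{(q-1)/2}$ accounts for the sign in $\det A_q(1)=(-1)^{(q-1)/2}q^{q-2}$.
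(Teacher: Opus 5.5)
Your proof is correct, but it takes a different route from the paper. You use the cycle structure: $\tau_q$ is a fixed-point-free involution on a set of $q-1$ elements, so it is a product of $(q-1)/2$ disjoint transpositions.

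The paper instead computes the sign inside the field via the Vandermonde ratio $\sign{\tau_q}=\prod_{1\le i<j\le q-1}\frac{\tau_q(x_j)-\tau_q(x_i)}{x_j-x_i}$. This is legitimate because $1\neq -1$ in $\mathbb{F}_q$ for odd $q$. Each factor equals $-1$, which gives $(-1)^{(q-1)(q-2)/2}$, and since $q-2$ is odd this reduces to $(-1)^{(q-1)/2}$.

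Your argument is more elementary and purely combinatorial. It uses nothing about $\tau_q$ beyond its being a fixed-point-free involution. It also avoids both reading signs as field elements and the final parity reduction.

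The paper's argument instead exploits the linearity of $\tau_q$, so it extends at once to any scaling $x\mapsto ax$. There it would give sign $a^{(q-1)(q-2)/2}=\phi_q(a)$, which is Zolotarev's lemma. Your method would first need the cycle structure of such a map: all cycles have length equal to the order of $a$.

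You are also right that both arguments hold for every odd prime power $q$. That is the generality Theorem~\ref{Thm. A} actually uses, even though the lemma is stated only for an odd prime.
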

	
	\begin{proof}
		As $q$ is an odd prime power, the integers $\pm 1$ can be viewed as two distinct elements of $\mathbb{F}_q$. Thus, 
		$$\sign{\tau_q}=\prod_{1\le i<j\le q-1}-\frac{x_j-x_i}{x_j-x_i}=(-1)^{\frac{(q-1)(q-2)}{2}}=(-1)^{\frac{q-1}{2}}.$$
		This completes the proof. 
	\end{proof}
	
	Now we are in a position to prove our first theorem.
	
	{\noindent\bfseries Proof of Theorem \ref{Thm. A}}. Define the $(q-1)\times (q-1)$ matrix $M_q$ by 
	$$M_q(i,j)=\zeta_p^{\Tr(x_ix_j)}$$
	for any integers $i,j\in[1,q-1]$. Let the $(q-1)\times (q-1)$ diagonal matrix 
	$$D_q=\diag\left(\zeta_p^{\Tr(x_1^{-1})}, \zeta_p^{\Tr(x_2^{-1})}, \cdots, \zeta_p^{\Tr(x_{q-1}^{-1})}\right).$$
	
	We first evaluate $\det M_q^{H} M_q$, where $M_q^H$ denotes the conjugate transpose (or Hermitian transpose) of $M_q$, i.e., $M_q^H=[\zeta_p^{-\Tr(x_ix_j)}]_{1\le i,j\le q-1}$. Given any integers $i,j\in[1,q-1]$, one can verify that 
	\begin{align}\label{Eq. a in the proof of Thm. A}
		M_q^H M_q(i,j)
		&=\sum_{k=1}^{q-1}M_q^H(i,k)M_q(k,j)\notag\\
		&=\sum_{k=1}^{q-1}\overline{M_q(k,i)}M_q(k,j)\notag\\
		&=\sum_{k=1}^{q-1}\zeta_p^{-\Tr(x_kx_i)}\zeta_p^{\Tr(x_kx_j)}\notag\\
		&=\sum_{x\in\mathbb{F}_q^{\times}}\zeta_p^{\Tr\left((x_j-x_i)x_k\right)}.
	\end{align}
	Since $\Tr: \mathbb{F}_q\rightarrow\mathbb{F}_p$ is a surjective $\mathbb{F}_p$-homomorphism with $\ker(\Tr)=\left\{x^p-x:\ x\in\mathbb{F}_q\right\}$, one can verify that 
	\begin{equation}\label{Eq. b in the proof of Thm. A}
		\sum_{x\in\mathbb{F}_q^{\times}}\zeta_p^{\Tr(x)}=-1+\sum_{x\in\mathbb{F}_q}\zeta_p^{\Tr(x)}=-1+\#\ker(\Tr)\sum_{y\in\mathbb{F}_p}\zeta_p^y=-1+\#\ker(\Tr)\sum_{y=0}^{p-1}\zeta_p^y=-1,
	\end{equation}
	where $\# S$ denotes the cardinality of a finite set $S$. By (\ref{Eq. a in the proof of Thm. A}) and (\ref{Eq. b in the proof of Thm. A}) we obtain 
	$$M_q^H M_q(i,j)=
	\begin{cases}
		q-1  &  \mbox{if}\ 1\le i=j\le q-1,\\
		-1    &   \mbox{if}\ 1\le i\neq j\le q-1.
	\end{cases}$$
	Thus, 
	$$M_q^{H}M_q=qI_{q-1}-J_{q-1},$$
	where $I_{q-1}$ is the $(q-1)\times (q-1)$ identity matrix and $J_{q-1}$ is a $(q-1)\times (q-1)$ matrix with all entries $1$. Since the numbers $q-1, 0, 0, \cdots, 0$ are exactly all the eigenvalues of $J_{q-1}$, we see that $1, q, \cdots, q$ are precisely all the eigenvalues of $M_q^{H}M_q$. Therefore, 
	\begin{equation}\label{Eq. c in the proof of Thm. A}
		\det M_q^{H}M_q=q^{q-2}
	\end{equation}
	
	Next we compute $\det D_q$. Noting that
	\begin{equation}\label{Eq. sum of trace is 0}
		\sum_{x\in\mathbb{F}_q^{\times}}\Tr(x)=\Tr\left(\sum_{x\in\mathbb{F}_q^{\times}}x\right)=\Tr(0)=0,
	\end{equation}
	we obtain 
	\begin{equation}\label{Eq. d in the proof of Thm. A}
		\det D_q=\prod_{k=1}^{q-1}\zeta_p^{\Tr(x_k^{-1})}=\prod_{x\in\mathbb{F}_q^{\times}}\zeta_p^{\Tr(x)}=1.
	\end{equation}
	
	Now we consider $\det [K_q(x_i-x_j)]_{1\le i,j\le q-1}$. As 
	$$K_q(x_i-x_j)=\sum_{k=1}^{q-1}\zeta_p^{\Tr\left(x_k+\frac{x_i-x_j}{x_k}\right)}
	=\sum_{k=1}^{q-1}\zeta_p^{\Tr\left((x_i-x_j)x_k+\frac{1}{x_k}\right)}$$
	for any integers $i,j\in[1,q-1]$. Thus, we obtain the decomposition
	$$\left[K_q(x_i-x_j)\right]_{1\le i,j\le q-1}=M_qD_qM_q^{H}.$$
	Assembling this and (\ref{Eq. c in the proof of Thm. A})--(\ref{Eq. d in the proof of Thm. A}), we have 
	\begin{equation}\label{Eq. e in the proof of Thm. A}
		\det \left[K_q(x_i-x_j)\right]_{1\le i,j\le q-1}=\det M_qD_qM_q^{H}=\det M_q^{H}M_q\cdot \det D_q=q^{q-2}.
	\end{equation}
	
	Finally, we determine $\det A_q(1)$. By Lemma \ref{Lem. permutation}, we see that 
    $$\det A_q(1)=\sign{\tau_q}\cdot \det \left[K_q(x_i-x_j)\right]_{1\le i,j\le q-1}=(-1)^{(q-1)/2}q^{q-2}.$$
	
	In view of the above, we have completed the proof. \qed 

    \section{Proof of Theorem \ref{Thm. B}}
    \setcounter{lemma}{0}
    \setcounter{theorem}{0}
    \setcounter{equation}{0}
    \setcounter{conjecture}{0}
    \setcounter{remark}{0}
    \setcounter{corollary}{0}

   Recall that $q=2n+1$ is an odd prime power,  $\mathbb{F}_q^{\times}=\left\{x_1,x_2,\cdots,x_{q-1}\right\}$, 
   $$\mathcal{S}_q=\left\{x^2:\ x\in\mathbb{F}_q^{\times}\right\}=\left\{s_1,s_2,\cdots,s_n\right\},$$
   and $\mathcal{N}_q=\mathbb{F}_q^{\times}\setminus\mathcal{S}_q$. 
   Let 
   \begin{equation}\label{Eq. definition of Hq}
   	 \mathcal{H}_q=\left\{y_1,y_2,\cdots,y_n\right\}\subseteq\mathbb{F}_q^{\times}
   \end{equation}
   such that $y_k^2=s_k$ for any integer $k\in[1,n]$. By the definition of $\mathcal{H}_q$, we clearly have 
   $$\left\{y:\ y\in\mathcal{H}_q\right\}\cup\left\{-y:\ y\in\mathcal{H}_q\right\}=\mathbb{F}_q^{\times}.$$

   We begin with the following lemma.
   
   \begin{lemma}\label{Eq. number of solutions of an equation over finite field}
   	Let $q=2n+1$ be an odd prime power. Then 
   	$$\#\left\{(x,y)\in\mathcal{H}_q\times\mathcal{H}_q:\ x^2+y^2=1\right\}=\frac{1}{4}\left(q-4+(-1)^{n+1}\right).$$
   	
   \end{lemma}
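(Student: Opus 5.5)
The map $y\mapsto y^2$ is a bijection from $\mathcal{H}_q$ onto $\mathcal{S}_q$. Hence the set in the statement is in bijection with
$$\left\{(s,t)\in\mathcal{S}_q\times\mathcal{S}_q:\ s+t=1\right\}.$$
I will therefore count pairs of nonzero squares summing to $1$. To do this, first count
$$N=\#\left\{(u,v)\in\mathbb{F}_q\times\mathbb{F}_q:\ u^2+v^2=1\right\},$$
and then discard the solutions with $uv=0$.

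Every pair $(s,t)\in\mathcal{S}_q^2$ with $s+t=1$ lifts to exactly $4$ pairs $(u,v)$ with $u,v\neq 0$, namely $(\pm y_s,\pm y_t)$. So the desired count is $\frac14(N-E)$, where $E$ is the number of solutions with $u=0$ or $v=0$. Since $u=0$ forces $v=\pm1$ and vice versa, and these four points are distinct, we get $E=4$.

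Next evaluate $N$ with the quadratic character:
$$N=\sum_{a+b=1}\bigl(1+\phi_q(a)\bigr)\bigl(1+\phi_q(b)\bigr).$$
Expanding, the constant term contributes $q$. The two linear terms vanish, because $\sum_{a\in\mathbb{F}_q}\phi_q(a)=0$. The remaining term is the Jacobi sum $\sum_{a}\phi_q(a)\phi_q(1-a)$. By the standard substitution $a\mapsto a^{-1}$ for $a\neq0$, it equals $\sum_{a\neq0}\phi_q(a^{-1}-1)=\sum_{c\neq -1}\phi_q(c)=-\phi_q(-1)$. Since $\phi_q(-1)=(-1)^{(q-1)/2}=(-1)^n$, this gives
$$N=q-(-1)^n=q+(-1)^{n+1}.$$

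Combining the two steps, the count is $\frac14\left(q+(-1)^{n+1}-4\right)$, as claimed. There is no real obstacle; the only points needing care are the exact bookkeeping of the excluded points and the sign $\phi_q(-1)=(-1)^n$ in the Jacobi sum. As a sanity check, $q=5$ gives $0$, and indeed $1+1=2$, $1+4=0$, $4+4=3$, none of which equals $1$.
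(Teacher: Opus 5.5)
Your proof is correct and follows essentially the same route as the paper: count all solutions of $u^2+v^2=1$ in $\mathbb{F}_q^2$, remove the four points with $uv=0$, and divide by $4$. The only difference is in evaluating the affine count. The paper fibers over $u$ and cites $\sum_{x}\phi_q(x^2-1)=-1$ from Lidl--Niederreiter, whereas you write the count as $q+\sum_a\phi_q(a)\phi_q(1-a)$ and evaluate this Jacobi sum as $-\phi_q(-1)$ directly by inversion, which makes your argument self-contained.
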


   \begin{proof}
   	By symmetry one can verify that 
   	\begin{align*}
   	&\#\left\{(x,y)\in\mathcal{H}_q\times\mathcal{H}_q:\ x^2+y^2=1\right\}\\
  =&\frac{1}{4}\#\left\{(x,y)\in\mathbb{F}_q^{\times}\times\mathbb{F}_q^{\times}:\ x^2+y^2=1\right\}\\
  =&\frac{1}{4}\#\left\{(x,y)\in\mathbb{F}_q\times\mathbb{F}_q:\ x^2+y^2=1\right\}-1\\
  =&\frac{1}{4}\sum_{x\in\mathbb{F}_q}\left(1+\phi_q(1-x^2)\right)-1\\
  =&\frac{q-4}{4}+\frac{\phi_q(-1)}{4}\sum_{x\in\mathbb{F}_q}\phi_q(x^2-1)\\
  =&\frac{1}{4}\left(q-4+(-1)^{n+1}\right),
   	\end{align*}
   	where the last equality follows from the following known result (see \cite[Theorem 5.48]{Lidl})
   	$$\sum_{x\in\mathbb{F}_q}\phi_q(x^2-1)=-1.$$
   	
   	In view of the above, we have completed the proof. 
   \end{proof}

   Now we define an $n\times n$ symmetric matrix $N_q$ by 
   \begin{equation}\label{Eq. definition of Nq}
   	N_q(i,j)=\zeta_p^{\Tr(2y_iy_j)}+\zeta_p^{\Tr(-2y_iy_j)}
   \end{equation}
   for any integers $i,j\in [1,n]$. We need the following result.
   
   \begin{lemma}\label{Lem. Nq is an invertible matrix}
   	 Let $q=2n+1$ be an odd prime power. Then $N_q$ is an invertible matrix. 
   \end{lemma}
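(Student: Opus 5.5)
Following the proof of Theorem \ref{Thm. A}, we compute $N_q^{H}N_q=N_q^2$ (note $N_q$ is real symmetric, since $N_q(i,j)=2\cos(2\pi\Tr(2y_iy_j)/p)$) and show it is a nonsingular explicit matrix. For integers $i,j\in[1,n]$ we have
$$N_q^2(i,j)=\sum_{k=1}^{n}\left(\zeta_p^{\Tr(2y_iy_k)}+\zeta_p^{-\Tr(2y_iy_k)}\right)\left(\zeta_p^{\Tr(2y_ky_j)}+\zeta_p^{-\Tr(2y_ky_j)}\right).$$
Expanding the product gives four terms. Since $\mathcal{H}_q\cup(-\mathcal{H}_q)=\mathbb{F}_q^{\times}$ is a disjoint union, we can combine the $k$-sum with the sign flip $y_k\mapsto -y_k$. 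This yields
$$N_q^2(i,j)=\sum_{x\in\mathbb{F}_q^{\times}}\zeta_p^{\Tr(2x(y_i+y_j))}+\sum_{x\in\mathbb{F}_q^{\times}}\zeta_p^{\Tr(2x(y_i-y_j))}.$$

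By (\ref{Eq. b in the proof of Thm. A}), each of these sums equals $q-1$ if its linear coefficient vanishes and $-1$ otherwise. Now $y_i+y_j\neq 0$ always, because $y_j=-y_i$ would force $-y_i\in\mathcal{H}_q$, contradicting disjointness. Also $y_i-y_j=0$ exactly when $i=j$. Hence
$$N_q^2=qI_n-2J_n.$$
The eigenvalues of $J_n$ are $n,0,\dots,0$, so the eigenvalues of $N_q^2$ are $q-2n=1$ and $q$ (with multiplicity $n-1$). Therefore $\det N_q^2=q^{n-1}\neq0$, and consequently $\det N_q=\pm q^{(n-1)/2}\neq 0$. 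This shows $N_q$ is invertible.

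The only delicate step is the reindexing over $\pm\mathcal{H}_q$. Pairing the terms correctly gives
$$\zeta^{a}\zeta^{b}+\zeta^{-a}\zeta^{-b}+\zeta^{a}\zeta^{-b}+\zeta^{-a}\zeta^{b},$$
where $a=\Tr(2y_iy_k)$ and $b=\Tr(2y_ky_j)$. The first pair becomes the full $\mathbb{F}_q^{\times}$-sum with coefficient $2(y_i+y_j)$, and the second pair becomes the full sum with coefficient $2(y_i-y_j)$ after substituting $x=\pm y_k$. Here we use that $2$ is invertible in $\mathbb{F}_q$, so the coefficient $2(y_i\pm y_j)$ is nonzero exactly when $y_i\pm y_j$ is.
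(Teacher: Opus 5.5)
Your proposal is correct and follows essentially the same route as the paper. The paper computes $N_qN_q^T$ instead of $N_q^2$ (identical since $N_q$ is symmetric), uses the same $\pm\mathcal{H}_q$ reindexing and $y_i+y_j\neq 0$ observation, and likewise obtains $N_qN_q^T=qI_n-2J_n$ with $(\det N_q)^2=q^{n-1}$.
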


   \begin{proof}
   	Let $N_q^T$ be the transpose of $N_q$. For any integers $i,j\in[1,n]$, recalling that 
   	$$\left\{y:\ y\in\mathcal{H}_q\right\}\cup\left\{-y:\ y\in\mathcal{H}_q\right\}=\mathbb{F}_q^{\times},$$
   	one can verify that 
   	\begin{align}\label{Eq. a in the proof of Lem. Nq is an invertible matrix}
   		N_qN_q^T(i,j)
 &=\sum_{k=1}^n\left(\zeta_p^{\Tr(2y_iy_k)}+\zeta_p^{-\Tr(2y_iy_k)}\right)\left(\zeta_p^{\Tr(2y_jy_k)}+\zeta_p^{-\Tr(2y_jy_k)}\right)\notag\\
 &=\sum_{k=1}^n\left(\zeta_p^{\Tr(2(y_i+y_j)y_k)}+\zeta_p^{\Tr(-2(y_i+y_j)y_k)}\right)+\sum_{k=1}^n\left(\zeta_p^{\Tr(2(y_i-y_j)y_k)}+\zeta_p^{\Tr(-2(y_i-y_j)y_k)}\right)\notag\\
 &=\sum_{x\in\mathbb{F}_q^{\times}}\zeta_p^{\Tr(2(y_i+y_j)x)}+\sum_{x\in\mathbb{F}_q^{\times}}\zeta_p^{\Tr(2(y_i-y_j)x)}\notag\\
 &=\sum_{x\in\mathbb{F}_q^{\times}}\zeta_p^{\Tr((y_i+y_j)x)}+\sum_{x\in\mathbb{F}_q^{\times}}\zeta_p^{\Tr((y_i-y_j)x)}.
   	\end{align}
   	Observe that $y+y'\neq 0\in\mathbb{F}_q$ for any $y,y'\in\mathcal{H}_q$. Hence, for any integers $i,j\in[1,n]$, by (\ref{Eq. b in the proof of Thm. A}) we obtain 
   	\begin{equation}\label{Eq. b in the proof of Lem. Nq is an invertible matrix}
   		\sum_{x\in\mathbb{F}_q^{\times}}\zeta_p^{\Tr((y_i+y_j)x)}=\sum_{x\in\mathbb{F}_q^{\times}}\zeta_p^{\Tr(x)}=-1,
   	\end{equation}
   	and 
   	\begin{equation}\label{Eq. c in the proof of Lem. Nq is an invertible matrix}
   		\sum_{x\in\mathbb{F}_q^{\times}}\zeta_p^{\Tr((y_i-y_j)x)}=
   		\begin{cases}
   			q-1  &  \mbox{if}\ i=j,\\
   			-1    &  \mbox{if}\ i\neq j.
   		\end{cases}
   	\end{equation}
   	Assembling (\ref{Eq. a in the proof of Lem. Nq is an invertible matrix})--(\ref{Eq. c in the proof of Lem. Nq is an invertible matrix}) gives 
   	$$N_qN_q^T=qI_n-2J_n,$$
   	where $I_n$ is the identity matrix of order $n$, and $J_n$ is an $n\times n$ matrix of all entries $1$. Noting that $n, 0, \cdots, 0$ are exactly all the eigenvalues of $J_n$, we see that $1, q,\cdots,q$ are precisely all the eigenvalues of $N_qN_q^T$. Thus, 
   	$$(\det N_q)^2=\det N_qN_q^T=q^{n-1}\neq 0.$$
   	This shows that $N_q$ is invertible.
   	
   	In view of the above, we have completed the proof. 
   \end{proof}

   Next we turn to the quadratic Gauss sum 
   \begin{equation}\label{Eq. definition of quadratic gauss sums}
   G_q=\sum_{x\in\mathbb{F}_q}\phi_q(x)\zeta_p^{\Tr(x)}.
\end{equation}
  By (\ref{Eq. b in the proof of Thm. A}) we see that
  $$\sum_{x\in\mathbb{F}_q}\zeta_p^{\Tr(x)}=0.$$
  Thus, we have 
  \begin{equation}\label{Eq. quadratic gauss sums with Sq}
  	G_q=\sum_{x\in\mathbb{F}_q}\left(1+\phi_q(x)\right)\zeta_p^{\Tr(x)}=\sum_{x\in\mathbb{F}_q}\zeta_p^{\Tr(x^2)}=1+2\sum_{x\in\mathcal{S}_q}\zeta_p^{\Tr(x)},
  \end{equation}
  and 
  \begin{equation}\label{Eq. quadratic gauss sums with Nq}
  	G_q=\sum_{x\in\mathbb{F}_q}\left(-1+\phi_q(x)\right)\zeta_p^{\Tr(x)}=-1-2\sum_{x\in\mathcal{N}_q}\zeta_p^{\Tr(x)},
  \end{equation}
  where $\mathcal{S}_q=\left\{x^2:\ x\in\mathbb{F}_q^{\times}\right\}$ and $\mathcal{N}_q=\mathbb{F}_q^{\times}\setminus\mathcal{S}_q$. In 1811, Gauss  first obtained the explicit value of quadratic Gauss sum over $\mathbb{F}_p$, that is, 
   $$\sum_{x\in\mathbb{F}_p}\phi_p(x)\zeta_p^x=\sqrt{(-1)^{(p-1)/2}p}.$$
   Using the Hasse-Davenport lifting formula (cf. \cite[Theorem 3.7.4]{Cohen}), one can generalize the above result to $\mathbb{F}_q$, where $q=p^f$. More specifically, we have 
   \begin{equation}\label{Eq. explicit value of quadratic Gauss sums}
   		G_q=(-1)^{f-1}\i^{\frac{f(p-1)^2}{4}}\sqrt{q}.
   \end{equation}

   	We need the following result.
   	
   	\begin{lemma}\label{Lem. Galois actions on quadratic gauss sums}
   		Let $q$ be an odd prime power and $G_q$ be the quadratic Gauss sum over $\mathbb{F}_q$. 
   		
   		{\rm (i)} For any $a\in\mathbb{F}_q$, we have 
   		$$\sum_{x\in\mathbb{F}_q}\phi_q(x)\zeta_p^{\Tr(ax)}=\phi_q(a)G_q.$$
   		
   		{\rm (ii)} Given any $b\in\mathbb{F}_q^{\times}$, we have 
   		$$\sum_{x\in\mathbb{F}_q}\zeta_p^{\Tr(bx^2)}=\phi_q(b)G_q.$$
   	\end{lemma}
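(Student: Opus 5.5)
For part (i), I will split into the cases $a=0$ and $a\neq 0$. If $a=0$, the left-hand side is $\sum_{x\in\mathbb{F}_q}\phi_q(x)$. This vanishes because $\phi_q$ is a nontrivial character of $\mathbb{F}_q^{\times}$ and there are exactly $n$ squares and $n$ non-squares. It therefore agrees with $\phi_q(0)G_q=0$. If $a\in\mathbb{F}_q^{\times}$, I substitute $u=ax$, which is a bijection of $\mathbb{F}_q$. Multiplicativity gives $\phi_q(x)=\phi_q(a^{-1}u)=\phi_q(a^{-1})\phi_q(u)$, and $\phi_q(a^{-1})=\phi_q(a)$ since $\phi_q(a)=\pm1$. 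Hence
$$\sum_{x\in\mathbb{F}_q}\phi_q(x)\zeta_p^{\Tr(ax)}=\phi_q(a)\sum_{u\in\mathbb{F}_q}\phi_q(u)\zeta_p^{\Tr(u)}=\phi_q(a)G_q,$$
by the definition (\ref{Eq. definition of quadratic gauss sums}).

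For part (ii), I will count square roots. For each $u\in\mathbb{F}_q$, the number of $x\in\mathbb{F}_q$ with $x^2=u$ is $1+\phi_q(u)$: this is $1$ when $u=0$, $2$ when $u\in\mathcal{S}_q$, and $0$ when $u\in\mathcal{N}_q$. Therefore
$$\sum_{x\in\mathbb{F}_q}\zeta_p^{\Tr(bx^2)}=\sum_{u\in\mathbb{F}_q}\left(1+\phi_q(u)\right)\zeta_p^{\Tr(bu)}=\sum_{u\in\mathbb{F}_q}\zeta_p^{\Tr(bu)}+\sum_{u\in\mathbb{F}_q}\phi_q(u)\zeta_p^{\Tr(bu)}.$$
For the first sum, note that $u\mapsto bu$ permutes $\mathbb{F}_q$ because $b\neq 0$. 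So it equals $\sum_{x\in\mathbb{F}_q}\zeta_p^{\Tr(x)}=0$, which is the identity derived from (\ref{Eq. b in the proof of Thm. A}) just before (\ref{Eq. quadratic gauss sums with Sq}). The second sum equals $\phi_q(b)G_q$ by part (i), and this proves (ii).

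Nothing here is a genuine obstacle. The only points needing care are the $a=0$ case in (i), where one uses $\sum_x\phi_q(x)=0$, and the use of $b\neq0$ in (ii) to kill the additive-character sum. It is not necessary to invoke the explicit evaluation (\ref{Eq. explicit value of quadratic Gauss sums}).
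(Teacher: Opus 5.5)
Your proof is correct. Part (i) is the same argument as the paper's. For part (ii) you take a different route from the paper.

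\textbf{Your route for (ii).} You use $\#\{x: x^2=u\}=1+\phi_q(u)$ with a general $b$ in place of $1$. This turns the sum into $\sum_{u}\zeta_p^{\Tr(bu)}+\sum_{u}\phi_q(u)\zeta_p^{\Tr(bu)}$. The first sum vanishes because $b\neq 0$, and the second is $\phi_q(b)G_q$ by part (i).

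\textbf{The paper's route for (ii).} The paper splits on the quadratic character of $b$. For $b\in\mathcal{S}_q$, it absorbs $b=c^2$ into the variable and quotes (\ref{Eq. quadratic gauss sums with Sq}). For $b\in\mathcal{N}_q$, it observes that $x\mapsto bx^2$ maps $\mathbb{F}_q^{\times}$ two-to-one onto $\mathcal{N}_q$. This gives $1+2\sum_{x\in\mathcal{N}_q}\zeta_p^{\Tr(x)}$, which equals $-G_q$ by (\ref{Eq. quadratic gauss sums with Nq}).

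\textbf{Comparison.} Your argument is really the computation behind (\ref{Eq. quadratic gauss sums with Sq}) carried out with $b$ instead of $1$. It is uniform in $b$, needs no case distinction, and never uses (\ref{Eq. quadratic gauss sums with Nq}), so (ii) becomes a direct corollary of (i). The paper's version keeps (ii) logically independent of (i). It also shows concretely where the sign $\phi_q(b)$ comes from: whether the image of $x\mapsto bx^2$ on $\mathbb{F}_q^{\times}$ is $\mathcal{S}_q$ or $\mathcal{N}_q$. Neither approach needs the explicit evaluation (\ref{Eq. explicit value of quadratic Gauss sums}).
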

   
   \begin{proof}
   	(i) When $a=0$, since
   	$$\sum_{x\in\mathbb{F}_q}\phi_q(x)=0,$$
   	the result clearly holds. Now suppose $a\in\mathbb{F}_q^{\times}$. Then 
   	$$\sum_{x\in\mathbb{F}_q}\phi_q(x)\zeta_p^{\Tr(ax)}=\phi_q(a)\sum_{x\in\mathbb{F}_q}\phi_q(ax)\zeta_p^{\Tr(ax)}=\phi_q(a)\sum_{x\in\mathbb{F}_q}\phi_q(x)\zeta_p^{\Tr(x)}=\phi_q(a)G_q.$$
   	
   	(ii) If $b\in\mathcal{S}_q$, then by (\ref{Eq. quadratic gauss sums with Sq}) one can verify that 
   	$$\sum_{x\in\mathbb{F}_q}\zeta_p^{\Tr(bx^2)}=\sum_{x\in\mathbb{F}_q}\zeta_p^{\Tr(x^2)}=G_q=\phi_q(b)G_q.$$
   	For the case $b\in\mathcal{N}_q$, using (\ref{Eq. quadratic gauss sums with Nq}) we obtain 
   	$$\sum_{x\in\mathbb{F}_q}\zeta_p^{\Tr(bx^2)}=1+2\sum_{x\in\mathcal{N}_q}\zeta_p^{\Tr(x)}=-G_q=\phi_q(b)G_q.$$
   	
   	In view of the above, we have completed the proof.
   \end{proof}
   
   Now we are in a position to prove our second theorem. 

  {\noindent\bfseries Proof of Theorem \ref{Thm. B}}. Recall that the symmetric matrix $N_q$ and the subset $\mathcal{H}_q$ are defined by (\ref{Eq. definition of Nq}) and (\ref{Eq. definition of Hq}) respectively.  We first consider the matrix 
  $$L_q=N_qA_q(2)N_q.$$
  Given any integers $i,j\in[1,n]$, noting that $y_k^2=s_k$ for any integer $k\in[1,n]$, we have 
  \begin{align*}
  	    L_q(i,j)
 &= \sum_{k=1}^nN_q(i,k)\sum_{r=1}^{n}A_q(2)(k,r)N_q(r,j)\\
 &=\sum_{k=1}^n\left(\zeta_p^{\Tr(2y_iy_k)}+\zeta_p^{-\Tr(2y_iy_k)}\right)\sum_{r=1}^n\sum_{z\in\mathbb{F}_q^{\times}}\zeta_p^{\Tr\left(z+\frac{s_k+s_r}{z}\right)}\left(\zeta_p^{\Tr(2y_ry_j)}+\zeta_p^{-\Tr(2y_ry_j)}\right)\\
 &=\sum_{k=1}^n\left(\zeta_p^{\Tr(2y_iy_k)}+\zeta_p^{-\Tr(2y_iy_k)}\right)\sum_{r=1}^n\sum_{z\in\mathbb{F}_q^{\times}}\zeta_p^{\Tr\left(z(s_k+s_r)+\frac{1}{z}\right)}\left(\zeta_p^{\Tr(2y_ry_j)}+\zeta_p^{-\Tr(2y_ry_j)}\right)\\
 &=\sum_{k=1}^n\left(\zeta_p^{\Tr(2y_iy_k)}+\zeta_p^{-\Tr(2y_iy_k)}\right)\sum_{r=1}^n\sum_{z\in\mathbb{F}_q^{\times}}\zeta_p^{\Tr\left(z(y_k^2+y_r^2)+\frac{1}{z}\right)}\left(\zeta_p^{\Tr(2y_ry_j)}+\zeta_p^{-\Tr(2y_ry_j)}\right)\\
 &=\sum_{z\in\mathbb{F}_q^{\times}}\zeta_p^{\Tr\left(\frac{1}{z}\right)}\sum_{k=1}^n\left(\zeta_p^{\Tr(zy_k^2+2y_iy_k)}+\zeta_p^{\Tr(zy_k^2-2y_iy_k)}\right)\sum_{r=1}^n\left(\zeta_p^{\Tr(zy_r^2+2y_jy_r)}+\zeta_p^{\Tr(zy_r^2-2y_jy_r)}\right).
  \end{align*}
  Observing that 
  $$\left\{y:\ y\in\mathcal{H}_q\right\}\cup\left\{-y:\ y\in\mathcal{H}_q\right\}=\mathbb{F}_q^{\times},$$
  we obtain 
  \begin{align*}
  	   L_q(i,j)
 &=\sum_{z\in\mathbb{F}_q^{\times}}\zeta_p^{\Tr\left(\frac{1}{z}\right)}\sum_{x\in\mathbb{F}_q^{\times}}\zeta_p^{\Tr(zx^2+2y_ix)}\sum_{y\in\mathbb{F}_q^{\times}}\zeta_p^{\Tr(zy^2+2y_jy)}\\
 &=\sum_{z\in\mathbb{F}_q^{\times}}\zeta_p^{\Tr\left(\frac{1}{z}\right)}\left(-1+\sum_{x\in\mathbb{F}_q}\zeta_p^{\Tr(zx^2+2y_ix)}\right)\left(-1+\sum_{y\in\mathbb{F}_q}\zeta_p^{\Tr(zy^2+2y_jy)}\right)\\
 &=\sum_{z\in\mathbb{F}_q^{\times}}\zeta_p^{\Tr\left(\frac{1}{z}\right)}\left(-1+\sum_{x\in\mathbb{F}_q}\zeta_p^{\Tr\left(z(x+\frac{y_i}{z})^2-\frac{y_i^2}{z}\right)}\right)\left(-1+\sum_{y\in\mathbb{F}_q}\zeta_p^{\Tr\left(z(y+\frac{y_j}{z})^2-\frac{y_j^2}{z}\right)}\right)\\
 &=\sum_{z\in\mathbb{F}_q^{\times}}\zeta_p^{\Tr\left(\frac{1}{z}\right)}\left(-1+\zeta_p^{-\Tr(y_i^2/z)}\sum_{x\in\mathbb{F}_q}\zeta_p^{\Tr(zx^2)}\right)\left(-1+\zeta_p^{-\Tr(y_j^2/z)}\sum_{y\in\mathbb{F}_q}\zeta_p^{\Tr(zy^2)}\right)
\end{align*}
  Since $z\in\mathbb{F}_q^{\times}$, applying Lemma \ref{Lem. Galois actions on quadratic gauss sums}(ii) to the above equality, we obtain 
  \begin{align*}
  	   L_q(i,j)
&=\sum_{z\in\mathbb{F}_q^{\times}}\zeta_p^{\Tr\left(\frac{1}{z}\right)}\left(-1+\zeta_p^{-\Tr(y_i^2/z)}\phi_q(z)G_q\right)\left(-1+\zeta_p^{-\Tr(y_j^2/z)}\phi_q(z)G_q\right)\\
&=\sum_{z\in\mathbb{F}_q^{\times}}\zeta_p^{\Tr(z)}\left(-1+\zeta_p^{-\Tr(y_i^2z)}\phi_q(z)G_q\right)\left(-1+\zeta_p^{-\Tr(y_j^2z)}\phi_q(z)G_q\right)\\
&=\sum_{z\in\mathbb{F}_q^{\times}}\zeta_p^{\Tr(z)}-G_q\sum_{z\in\mathbb{F}_q^{\times}}\phi_q(z)\left(\zeta_p^{\Tr((1-y_i^2)z)}+\zeta_p^{\Tr((1-y_j^2)z)}\right)+G_q^2\sum_{z\in\mathbb{F}_q^{\times}}\zeta_p^{\Tr((1-y_i^2-y_j^2)z)}.
  \end{align*}
  Now using (\ref{Eq. b in the proof of Thm. A}) and Lemma \ref{Lem. Galois actions on quadratic gauss sums}(i), for any integers $i,j\in[1,n]$, we finally obtain 
  \begin{align}\label{Eq. a in the proof of Thm. B}
  	    L_q(i,j)
  &=-1-G_q^2\left(\phi_q(1-y_i^2)+\phi_q(1-y_j^2)\right)+G_q^2\sum_{z\in\mathbb{F}_q^{\times}}\zeta_p^{\Tr((1-y_i^2-y_j^2)z)}\notag\\
  &=-1-G_q^2\left(1+\phi_q(1-y_i^2)+\phi_q(1-y_j^2)\right)+G_q^2\delta_q(i,j),
  \end{align}
  where 
  $$\delta_q(i,j)=
  \begin{cases}
  	q  & \mbox{if}\ y_i^2+y_j^2=1,\\
  	0  & \mbox{otherwise.}
  \end{cases}$$
  
  Next we use (\ref{Eq. a in the proof of Thm. B}) to construct an additive decomposition of $L_q$. Define an $n\times 1$ matrix 
  $$\u_q=\left(t_1(q),t_2(q),\cdots,t_n(q)\right)^T,$$
  where 
  $$t_k(q)=\frac{1+G_q^2}{2}+G_q^2\cdot \phi_q(1-y_k^2)$$
  for any integer $k\in[1,n]$. Also, let the $n\times 1$ matrix 
  $$\1_n=\left(1,1,\cdots,1\right)^T.$$
  Then, for any integers $i,j\in[1,n]$, one can verify that 
  $$\left(\u_q\1_n^T+\1_n\u_q^T\right)(i,j)=1+G_q^2\left(1+\phi_q(1-y_i^2)+\phi_q(1-y_j^2)\right).$$
  Combining this with (\ref{Eq. a in the proof of Thm. B}), we obtain the additive decomposition
  \begin{equation}\label{Eq. b in the proof of Thm. B}
  	L_q=N_qA_q(2)N_q=G_q^2\left[\delta_q(i,j)\right]_{1\le i,j\le n}-\u_q\1_n^T+\1_n\u_q^T.
  \end{equation}
  
  Now we focus on $\rank{A_q(2)}$. Note that 
  $$\#\left\{i\in[1,n]:\ y_i^2+y_j^2=1\right\}\le 1$$
   for any integer $j\in[1,n]$, and that $y_1^2,y_2^2,\cdots,y_n^2$ are distinct. Thus, the non-zero entries of $[\delta_q(i,j)]_{1\le i,j\le n}$ are all in distinct rows and distinct columns. This, together with Lemma \ref{Eq. number of solutions of an equation over finite field}, implies that 
   	$$\rank{G_q^2\left[\delta_q(i,j)\right]_{1\le i,j\le n}}\le \#\left\{(x,y)\in\mathcal{H}_q\times\mathcal{H}_q:\ x^2+y^2=1\right\}=\frac{1}{4}\left(q-4+(-1)^{n+1}\right).$$
   Since $N_q$ is invertible by Lemma \ref{Lem. Nq is an invertible matrix}, applying the above inequality to (\ref{Eq. b in the proof of Thm. B}), we obtain 
   \begin{align*}
   	\rank{A_q(2)}=\rank{L_q}\le \frac{1}{4}\left(q-4+(-1)^{n+1}\right)+2=\frac{1}{4}\left(q+4+(-1)^{n+1}\right).
   \end{align*}
   By this we immediately obtain 
   $$\rank{A_q(2)}<n$$
   for any $q\ge 11$, that is, $A_q(2)$ is singular whenever $q\ge 11$. For $q=3,5,7$, with the help of a computer, we have $\det A_3(2)=2, \det A_5(2)=-5$, and $\det A_7(2)=49$. 
   
   In view of the above, we have completed the proof. \qed 
   
   {\bfseries Declaration of competing interest} 
   
   The author declares that he has no conflict of interest.
   
   {\bfseries Data availability}
   
   No data was used for the research described in the article.

\end{document}